\newtheorem*{theorem}{Theorem}
\newtheorem*{corollary}{Corollary}
\newtheorem{lemma}{Lemma}
\newtheorem{proposition}{Proposition}
\theoremstyle{definition}
\newtheorem*{example}{Example}
\DeclareMathOperator{\var}{var}
\begin{document}

\author{Tomas Persson}

\title{A strong Borel--Cantelli lemma for recurrence}

\date{\today}

\subjclass[2020]{37E05, 37A05, 37B20}

\keywords{Recurrence, Borel--Cantelli lemma, Return time}

\address{T.~Persson, Centre for Mathematical Sciences, Lund
  University, Box~118, 221~00~Lund, Sweden}

\email{tomasp@maths.lth.se}

\thanks{I thank Philipp Kunde for commenting on the manuscript,
  and I thank the referee for suggesting the inclusion of
  Section~\ref{sec:slowmixing}. I also thank Alejandro Rodriguez
  Sponheimer for pointing out several small errors, that have
  been corrected in the present version.}

\begin{abstract}
  Consider a dynamical systems $([0,1], T, \mu)$ which is
  exponentially mixing for $L^1$ against bounded variation. Given
  a non-summable sequence $(m_k)$ of non-negative numbers, one
  may define $r_k (x)$ such that $\mu (B(x, r_k(x)) = m_k$. It is
  proved that for almost all $x$, the number of $k \leq n$ such
  that $T^k (x) \in B_k (x)$ is approximately equal to
  $m_1 + \ldots + m_n$. This is a sort of strong Borel--Cantelli
  lemma for recurrence.

  A consequence is that
  \[
    \lim_{r \to 0} \frac{\log \tau_{B(x,r)} (x)}{- \log \mu (B (x,r))}
    = 1
  \]
  for almost every $x$, where $\tau$ is the return time.
\end{abstract}

\maketitle

\section{Recurrence}

A classical subject of study in the theory of dynamical systems
is the concept of recurrence, which goes back to the well known
Poincar\'{e} recurrence theorem, proved by Carath\'{e}odory
\cite{Caratheodory}.

Boshernitzan proved that if $(X, T, \mu)$ is a measure preserving
system of a metric space $X$ which is $\sigma$-finite with respect
to the $\alpha$ dimensional Hausdorff measure, then
\[
  \liminf_{k \to \infty} k^\frac{1}{\alpha} d(T^k (x), x) <
  \infty
\]
for $\mu$ almost every $x$. In other words, for almost every $x$,
there is a $c > 0$ such that
$T^k (x) \in B(x, c k^{-\frac{1}{\alpha}})$ for infinitely many
$k$. Yet another way to say this is that for almost all $x$,
there is a constant $c > 0$ such that
\[
  \sum_{k = 1}^\infty \mathbf{1}_{B (x, c k^{-\frac{1}{\alpha}})}
  (T^k (x)) = \infty,
\]
where $\mathbf{1}_{E}$ denotes the indicator function of the set
$E$.

Recent improvements of Boshernitzan's result for particular
classes of dynamical systems can be found in papers by Pawelec
\cite{Pawelec}; Chang, Wu and Wu \cite{Chang}; Baker and Farmer
\cite{Baker}; Hussain, Li, Simmons and Wang \cite{Husseinetal};
and by Kirsebom, Kunde and Persson \cite{KirsebomKundePersson}.

In this paper we shall refine the investigation and consider the
typical growth speed of
\[
  n \mapsto \sum_{k = 1}^n \mathbf{1}_{B (x, r_k)} (T^k
  (x)),
\]
for certain sequences $r_k$. That is, we shall count the number
of close returns before time $n$ of a point $x$ to itself.

In the setting of hitting a shrinking target $B(y_k, r_k)$, one
can sometimes prove that for almost all $x$ there are infinitely
many $k$ such that $T^k (x) \in B (y_k, r_k)$ provided
$\sum_{k = 1}^\infty \mu (B (y_k, r_k)) = \infty$. Such results
are called dynamical Borel--Cantelli lemmata. It is sometimes
possible to prove the stronger result that
\[
  \sum_{k = 1}^n \mathbf{1}_{B (y_k, r_k)} (T^k (x)) \sim \sum_{k
    = 1}^n \mu (B (y_k, r_k))
\]
for almost all $x$, see for instance Kim \cite{Kim}. Such results
are called strong dynamical Borel--Cantelli lemmata. Note that
here the centre $y_k$ of the target $B(y_k, r_k)$ may depend on
$k$, but does not depend on $x$ as in the results for recurrence
mentioned above.

The result of this paper is a sort of strong Borel--Cantelli
lemma for recurrence. In a certain sense, we shall obtain that
\[
  \sum_{k = 1}^n \mathbf{1}_{B (x, r_k)} (T^k (x)) \sim \sum_{k =
    1}^n \mu (B (x, r_k))
\]
holds for almost all $x$, under some rather mild assumptions.

For a measure preserving dynamical system $([0,1], T, \mu)$, we say
that correlations decay exponentially for $L^1$ against $BV$, if there
are constants $c, \tau > 0$ such that
\[
  \biggl| \int f \circ T^n g \, \mathrm{d} \mu - \int f \,
  \mathrm{d} \mu \int g \, \mathrm{d} \mu \biggr| \leq c e^{-
    \tau n} \lVert f \rVert_1 \lVert g \rVert_{BV},
\]
where $\lVert f \rVert_1 = \int |f| \, \mathrm{d} \mu$ and $\lVert g
\rVert_{BV}$ is the bounded variation norm, defined by
\[
\lVert g \rVert_{BV} = \sup |g| + \var g,
\]
where $\var g$ is the total variation of $g$.
It is well known that correlations decay exponentially for $L^1$
against $BV$, for instance if $T$ is a piecewise uniformly expanding
map of $[0,1]$ and $\mu$ is a Gibbs measure \cite{LasotaYorke,
  Rychlik, Liveranietal}. In that case, it is often the case that
there are constants $c, s > 0$ such that $\mu (B(x, r)) \leq c r^s$
always hold.

The result of this paper is the following theorem, which can be
thought of as a strong Borel--Cantelli lemma for recurrence.

\begin{theorem}
  Suppose that $T \colon [0,1] \to [0,1]$ has an invariant
  measure $\mu$ such that correlations for $L^1$ against $BV$
  decay exponentially, and that there are constants $c, s > 0$
  such that
  \[
    \mu (B(x,r)) \leq c r^s
  \]
  holds for all points $x$ and all $r > 0$.
  
  Let $(m_k)$ be a sequence such that
  \[
    m_k \geq \frac{(\log k)^{4 + \varepsilon}}{k}
  \]
  for some $\varepsilon > 0$, and such that
  \[
    \lim_{\rho \to 1^+} \limsup_{k \to \infty}
    \frac{m_k}{m_{[\rho k]}} = 1.
  \]

  Let $B_k (x)$ be the ball with centre $x$ and
  $\mu (B_k (x)) = m_k$. Then $\mu$ almost every $x$ satisfies
  \[
    \lim_{n \to \infty} \frac{\sum_{k = 1}^n \mathbf{1}_{B_k (x)}
      (T^k (x))}{\sum_{k = 1}^n \mu (B_k (x))} = 1.
  \]
\end{theorem}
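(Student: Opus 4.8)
The plan is to use the second moment method. Since $\mu(B_k(x)) = m_k$ by definition, the denominator $\sum_{k=1}^n \mu(B_k(x)) = \sum_{k=1}^n m_k =: M_n$ is deterministic, so the assertion is equivalent to showing that $S_n(x) := \sum_{k=1}^n \mathbf{1}_{B_k(x)}(T^k x)$ satisfies $S_n/M_n \to 1$ for $\mu$-almost every $x$. I would set $Y_k(x) = \mathbf{1}_{B_k(x)}(T^k x)$ and aim to prove the two estimates $\int S_n \, \mathrm{d}\mu = M_n(1 + o(1))$ and $\operatorname{Var} S_n = O(M_n)$, from which the pointwise limit will follow.

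First I would estimate the first moment. The obstacle compared with ordinary (non-recurrent) Borel--Cantelli lemmata is that the target $B_k(x)$ depends on the base point $x$, so $Y_k$ is not directly of the form $g\cdot(f\circ T^k)$ to which decay of correlations applies. To remedy this I would partition $[0,1]$ into a fine grid of intervals $I_j$ (with fineness depending on $k$) and, on each $I_j$, replace $B_k(x)$ by the fixed ball $B_k(x_j)$ centred at a representative point $x_j$. On $I_j$ one then has $\int_{I_j}\mathbf{1}_{B_k(x_j)}(T^k x)\,\mathrm{d}\mu \approx \mu(I_j)m_k$ by exponential decay of correlations, taking $f = \mathbf{1}_{B_k(x_j)}$ (so $\lVert f\rVert_1 = m_k$) and $g = \mathbf{1}_{I_j}$ (so $\lVert g\rVert_{BV}\le 3$); summing over $j$ gives $\int Y_k\,\mathrm{d}\mu \approx m_k$. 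Summing the decay-of-correlations errors over the grid costs a factor equal to the number of intervals, but this is harmless because $e^{-\tau k}$ beats any polynomial in the grid size.

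The heart of the matter, and the step I expect to be the main obstacle, is controlling the error committed when replacing $B_k(x)$ by $B_k(x_j)$. This error is governed by the $\mu$-measure of the thin annuli $B_k(x)\triangle B_k(x_j)$, and since only the upper bound $\mu(B(x,r))\le cr^s$ is available — with no matching lower bound, hence no a priori modulus of continuity for the radius $x\mapsto r_k(x)$ — this requires care. I would bound the annular contribution by sandwiching $B_k(x)$ between fixed balls of slightly larger and smaller radius, again integrated against the dynamics via decay of correlations, and use the measure bound together with a sufficiently (even exponentially) fine choice of grid to render the total error negligible relative to $m_k$. It is precisely here that the hypothesis $m_k \ge (\log k)^{4+\varepsilon}/k$ should enter: it guarantees $m_k$ is large enough that all these discretisation errors are of lower order.

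For the variance I would estimate $\operatorname{Cov}(Y_k,Y_l)$ for $k<l$ by the same discretisation followed by two nested applications of decay of correlations: first peel off the factor at time $l$ by writing the integrand on $I_j$ as $\mathbf{1}_{I_j}(x)\cdot\bigl[\mathbf{1}_{B_k(x_j)}\cdot(\mathbf{1}_{B_l(x_j)}\circ T^{l-k})\bigr](T^k x)$ and applying correlations at time $k$ (the inner factor has $L^1$ norm at most $m_k$), then apply correlations at time $l-k$ to the remaining two-factor integral. This yields $\operatorname{Cov}(Y_k,Y_l)\le C e^{-\tau(l-k)}m_l$ up to discretisation errors, and summing over $k<l\le n$ produces a geometric series in $l-k$, so that $\operatorname{Var}S_n = O(M_n)$, the diagonal contributing $\sum_k\operatorname{Var}Y_k\le\sum_k m_k = M_n$. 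Finally I would apply Chebyshev's inequality along a geometric subsequence $n_i = [\rho^i]$: the lower bound on $m_k$ forces $M_{n_i}\ge c\,i^{5+\varepsilon}$, whence $\sum_i \operatorname{Var}(S_{n_i})/M_{n_i}^2 \le C\sum_i M_{n_i}^{-1}<\infty$, and the Borel--Cantelli lemma gives $S_{n_i}/M_{n_i}\to1$ almost everywhere. Interpolating to all $n$ uses the monotonicity of $n\mapsto S_n$ together with the regularity hypothesis $\lim_{\rho\to1^+}\limsup_k m_k/m_{[\rho k]} = 1$, which forces $M_{n_{i+1}}/M_{n_i}\to1$ as $\rho\to1$ and hence removes the gaps between consecutive $n_i$.
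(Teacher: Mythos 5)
Your overall architecture (second moment method, Chebyshev along a geometric subsequence, interpolation via monotonicity and the regularity of $(m_k)$) is a reasonable plan, and your first-moment discretisation is essentially the argument behind the estimate $|\mu(E_k)-m_k|\leq ce^{-\eta k}$ that the paper imports from \cite{KirsebomKundePersson}. The gap is in the variance step, and it is fatal as written. The covariance bound you claim, $\operatorname{Cov}(Y_k,Y_l)\leq Ce^{-\tau(l-k)}m_l$ ``up to discretisation errors'', hides exactly the hard part: the discretisation errors are \emph{not} proportional to $m_l$. To localise the $x$-dependent ball $B_l(x)$ you need a grid of mesh $\delta$, which contributes an annulus error of order $\delta^s$ (this is all the upper bound $\mu(B(x,r))\leq cr^s$ gives you) and a decay-of-correlations error of order $\delta^{-1}e^{-\tau k}$ at the \emph{smaller} time $k$, since the error $ce^{-\tau k}\lVert f\rVert_1\lVert\mathbf{1}_{I_j}\rVert_{BV}$ must be summed over all $\delta^{-1}$ grid cells. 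These two requirements pull in opposite directions, and for pairs with $k$ small relative to $l$ you cannot make both errors $\lesssim m_l$. What one actually obtains (and what the paper uses, via Lemma~4.2 of \cite{KirsebomKundePersson}) is $\mu(E_k\cap E_l)\leq\mu(E_k)\mu(E_l)+ce^{-\eta k}+ce^{-\eta(l-k)}$, with additive error terms carrying no factor of $m_l$. Summing $e^{-\eta k}+e^{-\eta(l-k)}$ over $k<l\leq n$ gives $O(n)$, so your method yields $\operatorname{Var}(S_n)=O(n)$, not $O(M_n)$. Since the hypotheses allow $m_k$ as small as $(\log k)^{4+\varepsilon}/k$, the mean $M_n$ may grow only like $(\log n)^{5+\varepsilon}$, and then $\operatorname{Var}(S_n)/M_n^2=O\bigl(n/(\log n)^{10+2\varepsilon}\bigr)\to\infty$: Chebyshev gives nothing, even along $n_i=[\rho^i]$.

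This is precisely the obstruction the paper is designed to circumvent. Instead of the unweighted sum, it studies $\sum_{k\leq n}\mu(E_k)^{-1}\mathbf{1}_{E_k}$, whose expectation is $n$ rather than $M_n$. After dividing by $\mu(E_k)\mu(E_l)$, the same additive errors $e^{-\eta k}+e^{-\eta(l-k)}$ become $\bigl(e^{-\eta k}+e^{-\eta(l-k)}\bigr)/(\mu(E_k)\mu(E_l))$, and here the lower bound $m_k\geq(\log k)^{4+\varepsilon}/k$ enters (its actual role, not the one you assigned it): it gives $\mu(E_k)^{-1}\leq k(\log k)^{-4-\varepsilon}$, so the double sum is $O\bigl(n^2(\log n)^{-3-\varepsilon}\bigr)=o(n^2)$, small relative to the squared mean. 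A Sprind\v{z}uk-type lemma (Lemma~\ref{lem:Sprindzuk}) then gives $\frac1n\sum_{k\leq n}\mu(E_k)^{-1}\mathbf{1}_{E_k}(x)\to1$ almost everywhere, and a separate summation lemma (Lemma~\ref{lem:abel}), using the regularity hypothesis on $(m_k)$, converts this weighted statement back into $\sum_{k\leq n}\mathbf{1}_{E_k}/\sum_{k\leq n}m_k\to1$. You would need to either adopt this renormalisation or prove the genuinely stronger covariance bound with errors proportional to $m_l$, which the discretisation method does not deliver.
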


The last section of this paper contains two examples of systems
for which results of this type do not hold.

\section{Overview of the proof}

Let $E_k = \{\, x : T^k (x) \in B_k (x) \,\}$.  Then
$T^k (x) \in B_k (x)$ if and only if $x \in E_k$. Hence, the goal
is to estimate the sum
\[
  \sum_{k = 1}^n \mathbf{1}_{E_k} (x).
\]
It turns out that it is difficult to get good enough estimates to
handle this sum directly. The idea is therefore to instead consider
the sum
\[
  S_n (x) = \sum_{k = 1}^n \frac{\mathbf{1}_{E_k} (x)}{\mu
    (E_k)},
\]
which has the useful property that
\[
  \int S_n \, \mathrm{d} \mu = n.
\]

Using estimates on correlations, one obtains through
Lemma~\ref{lem:Sprindzuk} from Section~\ref{sec:lemmata}, the
asymptotic estimate
\begin{equation}
  \label{eq:asympt}
  S_n (x) = \sum_{k = 1}^n \frac{\mathbf{1}_{B_k (x)} (T^k (x))}{\mu
    (E_k)} \sim \sum_{k = 1}^n \frac{\mathbf{1}_{B_k (x)} (T^k (x))}{\mu
    (B_k (x))} \sim n
\end{equation}
for almost all $x$. Under some assumptions on the sequence
$m_k = \mu (B_k(x))$, Lemma~\ref{lem:abel} of the same section,
implies that a consequence of \eqref{eq:asympt} is that
\[
  \frac{ \sum_{k = 1}^n \mathbf{1}_{B_k (x)} (T^k (x))}{\sum_{k = 1}^n
    \mu (B_k(x)) } \to 1
\]
as $n \to \infty$.

The above mentioned lemmata are stated and proved in
Section~\ref{sec:lemmata}. The proof of the Theorem is in
Section~\ref{sec:proof}.

\section{An application to return times}

We give an application of the Theorem in this paper.  As
mentioned in the introduction, a strong Borel--Cantelli lemma is
a statement of the type that
\[
  \lim_{n \to \infty} \frac{\sum_{k=1}^n \mathbf{1}_{B(y, r_k)}
    (T^k (x))}{\sum_{k=1}^n \mu (B (y, r_k))} = 1
\]
holds for almost every $x$ when
$\sum_{k=1}^\infty \mu (B (y, r_k)) = \infty$. Galatolo and Kim
\cite{GalatoloKim} used strong Borel--Cantelli lemmata, to conclude
that
\[
  \lim_{n \to \infty} \frac{\log \tau_{B (y, r_n)} (x)}{- \log
    \mu (B(y, r_n))} = 1
\]
holds for almost every $x$, where $\tau_B$ is the hitting time to $B$,
defined by
\[
  \tau_B (x) = \min \{\, k > 0 : T^k (x) \in B \,\}.
\]
When $x \in B$, the time $\tau_B (x)$ is usually called the return
time of $x$ to $B$.

Adapting the proof of Galatolo and Kim, one obtains the following
result on return times as a corollary to the Theorem.

\begin{corollary}
  Under the assumptions of the Theorem, we have
  \[
    \lim_{r \to 0} \frac{\log \tau_{B(x, r)} (x)}{- \log \mu
      (B(x, r))} = 1
  \]
  for $\mu$ almost every $x$.
\end{corollary}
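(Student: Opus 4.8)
The plan is to deduce the Corollary from the Theorem by establishing the two one-sided bounds
\[
  \liminf_{r\to0}\frac{\log\tau_{B(x,r)}(x)}{-\log\mu(B(x,r))}\ge 1
  \quad\text{and}\quad
  \limsup_{r\to0}\frac{\log\tau_{B(x,r)}(x)}{-\log\mu(B(x,r))}\le 1
\]
for $\mu$ almost every $x$, and then letting an auxiliary parameter tend to $0$. Throughout I write $\rho=\mu(B(x,r))$ and recall that $\rho\to0$ and $\tau_{B(x,r)}(x)\to\infty$ as $r\to0$, by the assumption $\mu(B(x,r))\le cr^s$. For any admissible sequence I keep the notation $E_k=\{\,x:T^k(x)\in B_k(x)\,\}$ and use the first--moment estimate $\mu(E_k)\le C m_k$ valid for large $k$; this is exactly the bound on $\mu(E_k)$ produced by the correlation decay in the course of proving the Theorem (the main term being $\int m_k\,\mathrm{d}\mu=m_k$ and the error being exponentially small in $k$), and it is all that the lower bound needs.

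For the lower bound I would run the convergence half of Borel--Cantelli with the summable choice $m_k=k^{-1/(1-\delta)}$, $\delta\in(0,1)$ fixed. Since $1/(1-\delta)>1$ we have $\sum_k\mu(E_k)\le C\sum_k k^{-1/(1-\delta)}<\infty$, so for almost every $x$ only finitely many of the events $E_k$ occur; that is, $d(T^k(x),x)\ge r_k(x)$ for all large $k$, where $\mu(B(x,r_k(x)))=k^{-1/(1-\delta)}$. Applying this at $k=\tau_{B(x,r)}(x)$, which exceeds any fixed threshold once $r$ is small, the inclusion $T^k(x)\in B(x,r)$ forces $r>r_k(x)$ and hence $\rho\ge k^{-1/(1-\delta)}=\tau_{B(x,r)}(x)^{-1/(1-\delta)}$. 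Rearranging gives $\tau_{B(x,r)}(x)\ge\rho^{-(1-\delta)}$ for all small $r$, which is the desired lower bound with $1-\delta$ in place of $1$.

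For the upper bound I would invoke the Theorem itself with $m_k=k^{-\beta}$, choosing $\beta\in\bigl(\tfrac{1}{1+\delta},1\bigr)$; this sequence satisfies the hypotheses of the Theorem. Fixing $x$ in the full-measure set supplied by the Theorem, set $N_1=\rho^{-1/\beta}$ and $N_2=\rho^{-(1+\delta)}$. For $k\ge N_1$ one has $m_k=k^{-\beta}\le\rho$, so $B_k(x)\subseteq B(x,r)$ and every return counted by $\mathbf{1}_{B_k(x)}(T^k(x))$ with $k$ in the window $[N_1,N_2]$ is a genuine return to $B(x,r)$. By the Theorem the window count equals $\sum_{k=1}^{N_2}\mathbf{1}_{B_k(x)}(T^k(x))-\sum_{k=1}^{N_1}\mathbf{1}_{B_k(x)}(T^k(x))$, which is asymptotic to $\sum_{k=N_1}^{N_2}m_k\asymp N_2^{1-\beta}-N_1^{1-\beta}$. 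The choice $\beta>\tfrac1{1+\delta}$ makes $N_1^{1-\beta}=o(N_2^{1-\beta})$, so the window sum is of the same order as $N_2^{1-\beta}\to\infty$ and the asymptotic error from the Theorem is negligible against it; hence the integer window count is positive once $r$ is small, giving $\tau_{B(x,r)}(x)\le N_2=\rho^{-(1+\delta)}$ and therefore the upper bound with $1+\delta$.

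Finally I would intersect the countably many full-measure sets obtained by letting $\delta$ run through a sequence decreasing to $0$; on the intersection both inequalities hold for every such $\delta$, and letting $\delta\to0$ yields the limit $1$. The main obstacle is the upper bound: the Theorem controls the partial sums $\sum_{k=1}^n$, whereas here I need a positive count over a window $[N_1,N_2]$ whose \emph{both} endpoints tend to infinity, at the polynomial rates $\rho^{-1/\beta}$ and $\rho^{-(1+\delta)}$, as $r\to0$. Making the window count tend to infinity despite the subtraction of two large partial sums is exactly what forces $\beta$ close to $1$, so that $N_2^{1-\beta}$ dominates $N_1^{1-\beta}$ and swallows the error term; verifying this domination uniformly as $r\to0$, together with the first--moment estimate $\mu(E_k)\le Cm_k$ underpinning the lower bound, is where the real work lies.
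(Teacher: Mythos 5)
Your proposal is correct, and its overall strategy is the same as the paper's (which adapts Galatolo--Kim): prove $\liminf \ge 1$ by the convergence half of Borel--Cantelli applied to a summable recurrence sequence, and $\limsup \le 1$ by exploiting the strong Borel--Cantelli conclusion of the Theorem. Your lower bound is essentially identical to the paper's first proposition: the paper uses the summable sequence $k^{-1/\theta}$ with $\theta<1$ and argues by contradiction, you use $k^{-1/(1-\delta)}$ and argue directly; both rest on the estimate $|\mu(E_k)-m_k|\le ce^{-\eta k}$ imported from Kirsebom--Kunde--Persson, exactly as you indicate. The upper bound is where you genuinely diverge. The paper applies the Theorem once, to the near-critical sequence $m_k=(\log k)^5/k$, and derives a contradiction: if $\tau_{B(x,\rho_{n_j})}(x)\ge \mu(B(x,\rho_{n_j}))^{-\theta}$ along a sequence, the counting function stalls on the window $(n_j, n_j^\theta/(\log n_j)^5]$, over which the partial sums of $m_k$ (which grow like $(\log n)^6$) increase by a definite factor $1+K$, contradicting the limit being $1$; a single full-measure set handles every $\theta>1$. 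You instead apply the Theorem to the family $m_k=k^{-\beta}$ with $\beta\in(\tfrac{1}{1+\delta},1)$, show directly that the window $[\rho^{-1/\beta},\rho^{-(1+\delta)}]$ must contain a return because $N_1^{1-\beta}=o(N_2^{1-\beta})$, and then intersect countably many full-measure sets as $\delta\downarrow 0$. Both work; the paper's version is more economical (one auxiliary sequence, no countable intersection for the limsup), while yours avoids the contradiction format and makes the quantitative mechanism --- that the tail of the window dominates the head together with the Theorem's error term --- completely explicit. Two small points to tidy up: the hypothesis $m_k\ge (\log k)^{4+\varepsilon}/k$ fails for $m_k=k^{-\beta}$ over a long initial range of $k$ (since $k^{1-\beta}$ beats $(\log k)^{4+\varepsilon}$ only eventually), so you should either note that modifying finitely many terms is harmless or take $m_k=\max\bigl(k^{-\beta},(\log k)^{4+\varepsilon}/k\bigr)$; and the inference from $\mu(B_k(x))\le\mu(B(x,r))$ to $B_k(x)\subseteq B(x,r)$ has the same degenerate-annulus caveat as the corresponding step in the paper, so it is at the paper's own level of rigour.
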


The proof of this corollary is in Section~\ref{sec:corollary}.

If we let
\[
  \underline{d}_\mu (x) = \liminf_{r \to 0} \frac{\log \mu (B (x,
    r))}{\log r}
\]
and
\[
  \overline{d}_\mu (x) = \limsup_{r \to 0} \frac{\log \mu (B (x,
    r))}{\log r}
\]
be the lower and upper pointwise dimensions of the measure $\mu$
at the point $x$, then it follows from the Corollary that
\[
  \underline{d}_\mu (x) = \liminf_{r \to 0} \frac{\log \tau_{B
      (x, r)} (x)}{- \log r} \leq \limsup_{r \to 0} \frac{\log
    \tau_{B (x, r)} (x)}{- \log r} = \overline{d}_\mu (x)
\]
holds for almost every $x$. In many cases it is known that
$\underline{d}_\mu (x) = \overline{d}_\mu (x)$ holds for almost
all $x$ so that in fact we have
\begin{equation}
  \label{eq:BS}
  \underline{d}_\mu (x) = \liminf_{r \to 0} \frac{\log \tau_{B
      (x, r)} (x)}{- \log r} = \limsup_{r \to 0} \frac{\log
    \tau_{B (x, r)} (x)}{- \log r} = \overline{d}_\mu (x)
\end{equation}
for almost all $x$.  The equalities in \eqref{eq:BS} have been
proved by Barreira and Saussol \cite{BarreiraSaussol} for
$C^{1+\alpha}$-diffeomorphisms when $\mu$ is an equilibrium
measure.

\section{Lemmata} \label{sec:lemmata}

The following lemma is a variation on a lemma by Sprindžuk
\cite[Lemma~10, page~45]{Sprindzuk}. The assumptions are a little
bit different, and the proof is similar, but easier.

\begin{lemma}
  \label{lem:Sprindzuk}
  Let $(f_k)$ be a sequence of integrable functions and
  $(\phi_k)$ a sequence of numbers such that $\phi_k \geq 1$ for
  all $k$.
  
  If for all $m < n$, we have
  \begin{equation}
    \label{eq:sumassumption}
    \int \biggl( \sum_{m < k \leq n} \biggl( f_k - \int f_k \,
    \mathrm{d} \mu \biggr) \biggr)^2 \, \mathrm{d} \mu \leq
    \sum_{m < k \leq n} \phi_k,
  \end{equation}
  then for every $\varepsilon > 0$, we have
  \[
    \sum_{k = 1}^n f_k (x) = \sum_{k = 1}^n \int f_k \,
    \mathrm{d} \mu + O \Bigl( (\log \Phi (n + 1))^{\frac{3}{2} +
      \varepsilon} \Phi (n + 1)^\frac{1}{2} \Bigr),
  \]
  for $\mu$ almost every $x$, where
  \[
    \Phi (n) = \sum_{k = 1}^n \phi_k.
  \]
\end{lemma}

Before proving Lemma~\ref{lem:Sprindzuk}, we state the following
lemma.

\begin{lemma} \label{lem:abel}
  Let $(a_k)$ be a decreasing sequence of non-negative numbers
  such that
  \[
    \sum_{k = 1}^\infty a_k = \infty
  \]
  and
  \begin{equation}
    \label{eq:akassumption}
    \lim_{\rho \to 1^+} \limsup_{k \to \infty}
    \frac{a_k}{a_{[\rho k]}} = 1.
  \end{equation}
  Suppose that $(x_k)$ is a sequence of non-negative numbers such
  that
  \[
    \lim_{n \to \infty} \frac{1}{n} \sum_{k = 1}^n
    \frac{x_k}{a_k} = 1.
  \]
  Then
  \[
    \lim_{n \to \infty} \frac{\sum_{k = 1}^n x_k}{\sum_{k = 1}^n
      a_k} = 1.
  \]
\end{lemma}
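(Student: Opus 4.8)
The plan is to reduce everything to a single summation-by-parts computation. Write $y_k = x_k / a_k$ and $S_n = \sum_{k=1}^n y_k$, so that the hypothesis reads $S_n / n \to 1$, while the desired conclusion is $\sum_{k=1}^n x_k \big/ A_n \to 1$, where $A_n = \sum_{k=1}^n a_k$. Since $x_k = a_k y_k$ and $y_k = S_k - S_{k-1}$ (with $S_0 = 0$), Abel summation gives
\[
  \sum_{k=1}^n x_k = a_n S_n + \sum_{k=1}^{n-1} (a_k - a_{k+1}) S_k,
\]
and because $(a_k)$ is decreasing, every coefficient $a_k - a_{k+1}$ is non-negative; this is the structural feature that will make all the estimates go through.

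First I would isolate the main term by pretending $S_k = k$. A short telescoping computation yields the exact identity $\sum_{k=1}^{n-1}(a_k - a_{k+1}) k = A_{n-1} - (n-1) a_n$, equivalently
\[
  a_n \cdot n + \sum_{k=1}^{n-1} (a_k - a_{k+1}) k = A_n.
\]
Substituting $S_k = k + e_k$ into the summation-by-parts formula therefore gives
\[
  \sum_{k=1}^n x_k = A_n + a_n e_n + \sum_{k=1}^{n-1} (a_k - a_{k+1}) e_k,
\]
so it remains only to show that the two error terms are $o(A_n)$.

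The error estimate is the heart of the matter. Fix $\delta > 0$. The hypothesis $S_k/k \to 1$ gives a $K$ with $|e_k| \le \delta k$ for all $k \ge K$. For the boundary term, monotonicity gives $n a_n \le A_n$, hence $|a_n e_n| \le \delta n a_n \le \delta A_n$ once $n \ge K$. For the sum, splitting at $K$ and using $a_k - a_{k+1} \ge 0$ together with the identity above,
\[
  \Biggl| \sum_{k=K}^{n-1} (a_k - a_{k+1}) e_k \Biggr| \le \delta \sum_{k=1}^{n-1} (a_k - a_{k+1}) k = \delta \bigl( A_{n-1} - (n-1) a_n \bigr) \le \delta A_n,
\]
while the finitely many terms with $k < K$ contribute a constant $C_K$ that is negligible because $A_n \to \infty$ (here the non-summability $\sum a_k = \infty$ is used). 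Altogether $\limsup_n \bigl| \sum_{k=1}^n x_k - A_n \bigr| / A_n \le 2\delta$, and letting $\delta \to 0$ finishes the proof.

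The main obstacle is precisely this uniform control of $\sum (a_k - a_{k+1}) e_k$: a priori the positive masses $a_k - a_{k+1}$ could be concentrated at indices where $e_k$ is largest, so that the errors do not average out against $A_n$. The monotonicity of $(a_k)$ neutralises this danger through the two inequalities $\sum_{k=1}^{n-1} (a_k - a_{k+1}) k \le A_n$ and $n a_n \le A_n$, which are what the argument really rests on. It is worth noting that the regularity hypothesis \eqref{eq:akassumption} does not seem to be needed for this particular lemma once monotonicity is exploited in this way; it enters the statement to match the assumptions imposed on $(m_k)$ in the Theorem, where the lemma is applied with $a_k = m_k$.
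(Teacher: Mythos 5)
Your proof is correct, and it takes a genuinely different route from the paper's. The paper partitions the indices into blocks $(n_k, n_{k+1}]$ with $n_k = [\rho^{k-1} n_1]$, uses the Ces\`aro hypothesis to control the block sums $\Delta_k = \sum_{l} x_l/a_l$, and then invokes both the monotonicity of $(a_k)$ and the regularity assumption \eqref{eq:akassumption} (through the constant $c_\rho$ with $1 \leq a_n/a_{[\rho n]} \leq c_\rho$) to trade $a_l$ for its nearly constant value on each block; the conclusion is reached by letting $\varepsilon \to 0$ and then $\rho \to 1^+$. You instead perform a single summation by parts, writing $\sum_{k \leq n} x_k - A_n = a_n e_n + \sum_{k < n}(a_k - a_{k+1}) e_k$ with $e_k = S_k - k = o(k)$, and observe that the non-negative weights $a_n$ and $a_k - a_{k+1}$, paired with $n$ and $k$ respectively, sum exactly to $A_n$, so the error is a weighted average of the quantities $|e_k|/k$. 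Both of your identities (the Abel summation formula and $\sum_{k=1}^{n-1}(a_k - a_{k+1})k = A_{n-1} - (n-1)a_n$, hence $n a_n + \sum_{k=1}^{n-1}(a_k-a_{k+1})k = A_n$) check out, and the three error estimates are all valid. Your argument is shorter and yields the stronger observation that, for the lemma as stated, hypothesis \eqref{eq:akassumption} is superfluous: only the monotonicity of $(a_k)$ and the divergence of $\sum a_k$ (needed to absorb the finitely many terms below the threshold $K$) are used. The regularity assumption is of course still needed in the Theorem itself, where it is imposed on $(m_k)$; your remark to that effect is apt.
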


We shall now prove the two lemmata, and start with
Lemma~\ref{lem:Sprindzuk}.

\begin{proof}[Proof of Lemma~\ref{lem:Sprindzuk}]
  If $I = (m, n]$ and $m$ and $n$ are integers, then we write
  \[
    \Phi (I) = \sum_{k \in I} \phi_k, \qquad \text{and} \qquad F
    (I, x) = \sum_{k \in I} f_k (x).
  \]
  Let $\Phi (n) = \Phi((0,n])$ and $F (n, x) = F ((0,n], x)$.

  When $u$ is a natural number, we let $n_u$ be the largest $n$
  for which $\Phi (n) < u$. Hence
  $\Phi (n_u) < u \leq \Phi (n_u + 1)$. If $u < v$, the interval
  $I = (u, v]$ is non-empty, but it can happen that $(n_u, n_v]$
  is empty.

  We define $\sigma ((u, v]) = (n_u, n_v]$. It follows that if
  $I = I_1 \cup I_2$, then
  $\sigma (I) = \sigma (I_1) \cup \sigma (I_2)$. Moreover, if the
  union $I_1 \cup I_2$ is disjoint, then so is
  $\sigma (I_1) \cup \sigma (I_2)$.

  For integers $0 \leq s \leq r$, we let $J_{r,s}$ be the set of
  intervals of the form $(i 2^s, (i+1) 2^s]$,
  $0 \leq i < 2^{r-s}$. Then
  \[
    \bigcup_{I \in J_{r,s}} \sigma(I) = \sigma ((0, 2^r]) = (0,
    n_{2^r}],
  \]
  is a disjoint union. Therefore,
  \[
    \sum_{I \in J_{r,s}} \Phi (\sigma(I)) = \Phi \biggl(
    \bigcup_{I \in J_{r,s}} \sigma(I) \biggr) = \Phi (n_{2^r}) <
    2^r.
  \]

  With $J_r = \bigcup_{s = 0}^r J_{r,s}$ we then have
  \[
    \sum_{I \in J_r} \Phi (\sigma(I)) = \sum_{s = 0}^r \sum_{I
      \in J_{r,s}} \Phi (\sigma(I)) < (r + 1) 2^r.
  \]
  Put
  \[
    g (r, x) = \sum_{I \in J_r} \biggl( F (\sigma(I), x) - \int F
    (\sigma(I), x) \, \mathrm{d} \mu (x) \biggr)^2.
  \]
  Then by \eqref{eq:sumassumption}, we have that
  \[
    \int g (r, x) \, \mathrm{d} \mu (x) \leq \sum_{I \in J_r}
    \Phi (\sigma(I)) < (r + 1) 2^r.
  \]
  It follows that
  \[
    \mu \{\, x : g(r, x) \geq (r + 1) r^{1 + 2 \varepsilon} 2^r
    \,\} < r^{-1 - 2 \varepsilon}
  \]
  where $\varepsilon > 0$. By the Borel--Cantelli lemma we then
  get that for almost all $x$,
  \[
    g (r, x) < (r + 1) r^{1 + 2 \varepsilon} 2^r
  \]
  holds for all large $r$.

  An interval $(0, v]$ is a union of at most $r = [\log_2 v] + 1$
  intervals from $J_r$. We let $J_r (v)$ denote the set of those
  intervals. Then $(0, n_v]$ is a disjoint union of the intervals
  $\sigma (I)$, $I \in J_r (v)$. We have
  \[
    F (n_v, x) - \int F (n_v, x) \, \mathrm{d} \mu (x) = \sum_{I
      \in J_r (v)} \biggl( F (\sigma(I), x) - \int F (\sigma(I),
    x) \, \mathrm{d} \mu (x) \biggr).
  \]
  and by the Cauchy--Bunyakovsky--Schwarz inequality, we have
  that
  \begin{align*}
    \biggl( F (n_v, x)
    &- \int F (n_v, x) \, \mathrm{d} \mu (x) \biggr)^2 \\
    & \leq \biggl( \sum_{I \in J_r (v)} 1^2 \biggr) \sum_{I \in
      J_r (v)} \biggl( F (\sigma(I), x) - \int F (\sigma(I), x)
      \, \mathrm{d} \mu (x) \biggr)^2 \\
    & \leq r \sum_{I \in J_r (v)} \biggl( F (\sigma(I), x) - \int
      F (\sigma(I), x) \, \mathrm{d} \mu (x) \biggr)^2 \\
    & \leq r g(r, x),
  \end{align*}
  since the sum has at most $r$ terms. Hence, for almost every
  $x$, we have
  \begin{align*}
    \sum_{k = 1}^{n_v} f_k (x)
    &= F (n_v, x) \leq \int F (n_v, x) \, \mathrm{d} \mu (x) +
      \sqrt{r g(r, x)} \\
    &= \int F (n_v, x) \, \mathrm{d} \mu (x) + O
      \Bigl( r^{\frac{3}{2} + \varepsilon}
      2^{\frac{r}{2}} \Bigr) \\    
    &= \sum_{k = 1}^{n_v} \int f_k (x) \, \mathrm{d}
      \mu (x) + O \Bigl( (\log v)^{\frac{3}{2} + \varepsilon}
      v^{\frac{1}{2}} \Bigr) \\
    &= \sum_{k = 1}^{n_v} \int f_k (x) \, \mathrm{d}
      \mu (x) + O \Bigl( (\log \Phi(n_{v}+1))^{\frac{3}{2} +
      \varepsilon} \Phi(n_{v}+1)^{\frac{1}{2}} \Bigr).
  \end{align*}

  Consider an arbitrary positive integer $n$. Since $\phi_k \geq
  1$, there is a $v$ such that $n = n_v$. This finishes the proof.
\end{proof}

\begin{proof}[Proof of Lemma~\ref{lem:abel}]
  Put
  \[
    \sigma_n = \frac{1}{n} \sum_{k = 1}^n \frac{x_k}{a_k}
  \]
  and let $\varepsilon > 0$. Take $n_1$ such that
  $|\sigma_n - 1| < \varepsilon$ for all $n \geq n_1$.

  Let $\rho > 1$ and put $n_k = [\rho^{k - 1} n_1]$. For every
  $k$, the number
  \[
    \Delta_k := n_{k+1} \sigma_{n_{k+1}} - n_k \sigma_{n_k} =
    \sum_{l = n_k + 1}^{n_{k+1}} \frac{x_l}{a_l}
  \]
  satisfies
  \begin{align*}
    \Delta_k & \leq n_{k+1} (1 + \varepsilon) - n_k (1 -
               \varepsilon) \\
             & = (n_{k+1} - n_k) (1 + \varepsilon) + 2
               \varepsilon n_k               
  \end{align*}
  and
  \[
    \Delta_k \geq (n_{k+1} - n_k) (1 - \varepsilon) - 2
    \varepsilon n_k.
  \]

  We have
  \begin{align*}
    n_k & \leq \rho^{k-1} n_1 = \frac{1}{\rho - 1} \bigl(\rho^{k}
          n_1 - \rho^{k-1} n_1 \bigr) \\
        & \leq \frac{1}{\rho - 1} (n_{k +1} - n_k + 1)
          \leq \frac{2}{\rho - 1} (n_{k+1} - n_k).
  \end{align*}
  It now follows that
  \begin{align*}
    \frac{1}{a_{n_k}} \sum_{l = n_k + 1}^{n_{k+1}} x_l \leq
    \sum_{l = n_k + 1}^{n_{k+1}} \frac{x_l}{a_l} = \Delta_k \leq
    (n_{k+1} - n_k) \Bigl(1 + \varepsilon + \frac{4 \varepsilon}{\rho
      - 1} \Bigr)
  \end{align*}
  and
  \[
    \frac{1}{a_{n_{k+1}}} \sum_{l = n_k + 1}^{n_{k+1}} x_l \geq
    (n_{k+1} - n_k) \Bigl(1 - \varepsilon - \frac{4
      \varepsilon}{\rho - 1} \Bigr).
  \]

  Letting $c_\rho$ be so that
  \[
    1 \leq \frac{a_n}{a_{[\rho n]}} \leq c_\rho
  \]
  for all $n \geq n_1$, we have
  \[
    \sum_{l = n_k + 1}^{n_{k+1}} x_l \leq \sum_{l = n_k +
      1}^{n_{k+1}} a_l c_\rho \Bigl(1 + \varepsilon + \frac{4
      \varepsilon}{\rho - 1} \Bigr)
  \]
  and
  \[
    \sum_{l = n_k + 1}^{n_{k+1}} x_l \geq \sum_{l = n_k +
      1}^{n_{k+1}} a_l c_\rho \Bigl(1 - \varepsilon - \frac{4
      \varepsilon}{\rho - 1} \Bigr).
  \]
  Hence
  \[
    c_\rho \Bigl(1 - \varepsilon - \frac{4 \varepsilon}{\rho - 1}
    \Bigr) \leq \frac{\sum_{l = n_1 + 1}^{n_m} x_l}{\sum_{l = n_1
        + 1}^{n_m} a_l} \leq c_\rho \Bigl(1 + \varepsilon +
    \frac{4 \varepsilon}{\rho - 1} \Bigr)
  \]
  holds for any $m > 1$.

  As will be proved below, from the assumption
  $\sum_{k = 1}^\infty a_k = \infty$ it now follows that
  \begin{equation}
    \label{eq:limsup}
    \limsup_{n \to \infty}
    \frac{\sum_{l = 1}^n x_l}{\sum_{l = 1}^n a_l} \leq \rho
    c_\rho \Bigl( 1 + \varepsilon + \frac{4 \varepsilon}{\rho -
      1} \Bigr)
  \end{equation}
  and
  \begin{equation}
    \label{eq:liminf}
    \liminf_{n \to \infty}
    \frac{\sum_{l = 1}^n x_l}{\sum_{l = 1}^n a_l} \geq
    \frac{c_\rho}{\rho} \Bigl( 1 - \varepsilon - \frac{4
      \varepsilon}{\rho - 1} \Bigr).
  \end{equation}
  This is proved as follows. For $n$ between $n_m$ and $n_{m+1}$
  we have
  \[
    \frac{\sum_{l = 1}^n x_l}{\sum_{l = 1}^n a_l} \leq
    \frac{\sum_{l = 1}^{n_{m+1}} x_l}{\sum_{l = 1}^{n_{m+1}} a_l}
    \frac{\sum_{l = 1}^{n_{m+1}} a_l}{\sum_{l = 1}^{n_m} a_l}
  \]
  and
  \begin{align*}
    \frac{\sum_{l = 1}^{n_{m+1}} a_l}{\sum_{l = 1}^{n_m} a_l}
    &= 1 + \frac{\sum_{l = n_m + 1}^{n_{m+1}} a_l}{\sum_{l = 1}^{n_m}
      a_l} \\
    &\leq 1 + \frac{(n_{m+1} - n_m) a_{n_m}}{n_m a_{n_m}} =
      1 + \frac{n_{m+1} - n_m}{n_m} \to \rho
  \end{align*}
  as $m \to \infty$. This proves \eqref{eq:limsup}, and the proof
  of \eqref{eq:liminf} is obtained in a similar way.

  We first note that we can let $c_\rho$ be fixed while
  increasing $n_1$. Letting $\varepsilon \to 0$, we may therefore
  keep $\rho$ and $c_\rho$ fixed. This proves that
  \begin{equation*}
    \limsup_{n \to \infty} \frac{\sum_{l = 1}^n x_l}{\sum_{l =
        1}^n a_l} \leq \rho c_\rho
  \end{equation*}
  and
  \begin{equation*}
    \liminf_{n \to \infty} \frac{\sum_{l = 1}^n x_l}{\sum_{l =
        1}^n a_l} \geq \frac{c_\rho}{\rho}.
  \end{equation*}
  Because of \eqref{eq:akassumption}, we can make $\rho$ and
  $c_\rho$ as close to $1$ as we desire, and hence
  \[
    \lim_{n \to \infty} \frac{\sum_{l = 1}^n
      x_l}{\sum_{l = 1}^n a_l} = 1
  \]
  which finishes the proof.
\end{proof}

\section{Proof of the Theorem}
\label{sec:proof}

We start the proof of the Theorem by establishing the following
estimate.

\begin{proposition} \label{prop:estimates}
  Suppose that $\mu$ is a probability measure and that $(E_n)$ is
  a sequence of sets that satisfy the estimate
  \begin{equation}
    \label{eq:Eest1}
    \mu (E_k \cap E_{k+l}) \leq \mu (E_k) \mu (E_{k+l}) + c
    e^{-\eta k} + c e^{-\eta l},
  \end{equation}
  for some $c, \eta > 0$. Then there is a constant $c_2$ such
  that for any $m < n$
  \[
    \int \biggl( \sum_{m < k \leq n} ( \mu(E_k)^{-1}
    \mathbf{1}_{E_k} - 1) \biggr)^2 \, \mathrm{d} \mu \leq c_2
    \sum_{m < k \leq n} \frac{1}{k^3 \mu (E_k)} \sum_{m < k \leq
      n} \frac{\log k}{\mu (E_k)}.
  \]
\end{proposition}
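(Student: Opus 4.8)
The plan is to expand the square and treat the diagonal and the off-diagonal terms separately. Writing $f_k = \mu(E_k)^{-1}\mathbf{1}_{E_k} - 1$, one has $\int f_k \, \mathrm{d}\mu = 0$, so that
\[
  \int \Bigl( \sum_{m < k \leq n} f_k \Bigr)^2 \mathrm{d}\mu
  = \sum_{m < k \leq n} \int f_k^2 \, \mathrm{d}\mu
  + 2 \sum_{m < k < j \leq n} \int f_k f_j \, \mathrm{d}\mu .
\]
A direct computation gives $\int f_k^2 \, \mathrm{d}\mu = \mu(E_k)^{-1} - 1 \leq \mu(E_k)^{-1}$ for the diagonal, and for $k < j$
\[
  \int f_k f_j \, \mathrm{d}\mu = \frac{\mu(E_k \cap E_j)}{\mu(E_k)\mu(E_j)} - 1 .
\]
The key point I would exploit is that there are \emph{two} competing upper bounds for this covariance: the trivial one from $\mu(E_k \cap E_j) \leq \min(\mu(E_k), \mu(E_j))$, which yields $\int f_k f_j \, \mathrm{d}\mu \leq \min(\mu(E_k)^{-1}, \mu(E_j)^{-1})$, and the one from the hypothesis \eqref{eq:Eest1} with $l = j - k$, which yields $\int f_k f_j \, \mathrm{d}\mu \leq ( c e^{-\eta k} + c e^{-\eta(j-k)} ) \mu(E_k)^{-1}\mu(E_j)^{-1}$.

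Neither bound is good on its own, and the heart of the argument is to use each in its correct range. I would fix the logarithmic cut-off $L_j = \tfrac{3}{\eta}\log j$ and split the pairs by the gap $j - k$. For a \emph{near} pair, $0 < j - k \leq L_j$, I use the trivial bound, so each such pair contributes at most $\mu(E_j)^{-1}$; since for each $j$ there are at most $L_j$ admissible values of $k$, the near pairs together contribute at most $\sum_{m<j\leq n} L_j \mu(E_j)^{-1} \lesssim \sum_{m<j\leq n} (\log j)\mu(E_j)^{-1}$, which is the second factor of the claimed product. For a \emph{far} pair, $j - k > L_j$, the choice of threshold gives exactly $e^{-\eta(j-k)} < e^{-\eta L_j} = j^{-3}$; using also $e^{-\eta k} \leq C_\eta k^{-3}$ with $C_\eta = \sup_k k^3 e^{-\eta k}$, the hypothesis bound becomes $\int f_k f_j \, \mathrm{d}\mu \leq c C_\eta ( k^{-3} + j^{-3} )\mu(E_k)^{-1}\mu(E_j)^{-1}$. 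Summing the two pieces over far pairs, and using $\mu(E_j)^{-1} \leq (\log j)\mu(E_j)^{-1}$ for the free index, gives
\[
  \sum_{\text{far}} \int f_k f_j \, \mathrm{d}\mu
  \lesssim \Bigl( \sum_{m<k\leq n} \frac{1}{k^3 \mu(E_k)} \Bigr)
  \Bigl( \sum_{m<j\leq n} \frac{\log j}{\mu(E_j)} \Bigr),
\]
which is precisely the product in the statement. Finally the diagonal is $\sum_{m<k\leq n}\mu(E_k)^{-1} \lesssim \sum_{m<k\leq n}(\log k)\mu(E_k)^{-1}$, of the same order as the near pairs, and is absorbed into the same expression. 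This also explains the appearance of the cube: $3$ is exactly $\eta L_j/\log j$, the power needed to turn $e^{-\eta(j-k)}$ into $j^{-3}$ at the threshold while keeping $\sum k^{-3}\mu(E_k)^{-1}$ convergent.

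The step I expect to be the main obstacle is precisely this dichotomy. If one tried to apply the correlation bound \eqref{eq:Eest1} uniformly to all pairs, the contribution of the near pairs, where $e^{-\eta(j-k)}$ is of order one, would be of order $\sum_k \mu(E_k)^{-2}$, which is of strictly larger order than the target and is useless. The real content is therefore the recognition that the correlation estimate must be \emph{discarded} for nearby indices in favour of the trivial inclusion bound, and the calibration of the cut-off $L_j = \tfrac{3}{\eta}\log j$ so that it simultaneously produces the $\log j$ factor (from counting the $O(\log j)$ near neighbours) and the $k^{-3}$ factor (from the exponential decay of the far correlations). Once the split is set up correctly, the remaining estimates are routine summations.
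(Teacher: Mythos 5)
Your proof is correct and follows essentially the same route as the paper's: both split the off-diagonal pairs at a logarithmic gap threshold calibrated so that $e^{-\eta(j-k)} \le j^{-3}$, use the trivial bound $\mu(E_k \cap E_j) \le \min(\mu(E_k), \mu(E_j))$ for the $O(\log j)$ near pairs, and invoke the correlation hypothesis only for the far pairs. The differences are cosmetic: the paper works with the uncentred $f_k = \mu(E_k)^{-1}\mathbf{1}_{E_k}$ and carves out an additional (superfluous) region of pairs with $j$ near the lower endpoint $m$, whereas you centre $f_k$ from the start and treat the diagonal explicitly; both arguments absorb the near-pair and diagonal contribution $\sum (\log k)/\mu(E_k)$ into the stated product in the same implicit way.
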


\begin{proof}
  We have of course that
  $\mu (E_k \cap E_{k+j}) \leq \mu (E_{k+j})$.

  Put
  \[
    f_k = \frac{1}{\mu (E_k)} \mathbf{1}_{E_k}
  \]
  and
  \[
    S = \int \biggl( \sum_{m < k \leq n} ( \mu(E_k)^{-1}
    \mathbf{1}_{E_k} - 1) \biggr)^2 \, \mathrm{d} \mu.
  \]
  Then
  \[
    \int f_k \, \mathrm{d} \mu = 1 \quad \text{and} \quad S =
    \sum_{m < j,k \leq n} \biggl( \int f_k f_j \, \mathrm{d} \mu
    - 1 \biggr).
  \]

  Let $c_0$ be a positive constant that will be chosen later.  We
  write $S$ as a sum of three parts, $S = A + B + C$, where
  \begin{align*}
    A &= 2 \sum_{m < k \leq n} \sum_{j = m + c_0 \log k}^{k - c_0
        \log k} \biggl( \int f_k f_j \, \mathrm{d} \mu - 1
        \biggr), \\
    B &= 2 \sum_{m < k \leq n} \sum_{j = k - c_0 \log k + 1}^k
        \biggl( \int f_k f_j \, \mathrm{d} \mu - 1 \biggr), \\
    C &= 2 \sum_{m < k \leq n} \sum_{j = m + 1}^{j = m + c_0 \log
        k} \biggl( \int f_k f_j \, \mathrm{d} \mu - 1 \biggr).
  \end{align*}
  We shall now estimate $A$, $B$ and $C$ separately.

  To estimate $A$, we use the assumption \eqref{eq:Eest1}, which
  implies that
  \[
    \mu (E_k \cap E_j) \leq \mu (E_k) \mu (E_j) + c e^{-\eta k} +
    c e^{- \eta (k-j)}
  \]
  for $j < k$. Hence
  \[
    \int f_k f_j \, \mathrm{d} \mu \leq 1 + \frac{1}{\mu (E_k)
      \mu (E_j)} (c e^{-\eta k} + c e^{- \eta (k-j)}).
  \]
  We may therefore estimate $A$ by
  \begin{align*}
    A &\leq 2 \sum_{m < k \leq n} \frac{1}{\mu (E_k)} \sum_{j = m
        + c_0 \log k}^{k - c_0 \log k} \frac{(c e^{-\eta k} + c
        e^{- \eta (k-j)})}{\mu (E_j)} \\
      & \leq 2 \sum_{m < k \leq
        n} \frac{c e^{- \eta k} + c e^{- \eta c_0 \log k}}{\mu (E_k)}
        \sum_{m < j \leq k} \frac{1}{\mu (E_j)}.
  \end{align*}
  Choose $c_0$ so large that $e^{- \eta c_0 \log k} \leq
  k^{-3}$. Then
  \[
    A \leq c_1 \biggl( \sum_{m < k \leq n} \frac{1}{\mu (E_k)}
    \biggr) \biggl( \sum_{m < k \leq n} \frac{1}{k^3 \mu (E_k)}
    \biggr),
  \]
  for some constant $c_1$.
  
  The terms $B$ and $C$ are both estimated in the following
  way. In the sums defining $B$ and $C$, there are not more that
  $c_0 \log k$ terms in the sum over $j$. We use the trivial
  estimate
  \[
    \int f_k f_j \, \mathrm{d} \mu \leq \frac{1}{\mu (E_k)} \int
    f_j \, \mathrm{d} \mu = \frac{1}{\mu (E_k)}.
  \]
  and obtain that
  \[
    B, C \leq 2 \sum_{m < k \leq n} \frac{c_0 \log k}{\mu (E_k)}.
  \]

  Combining all estimates, we get that
  \[
    S \leq c_2 \sum_{m < k \leq n} \frac{1}{k^3 \mu (E_k)}
    \sum_{m < k \leq n} \frac{\log k}{\mu (E_k)},
  \]
  for some constant $c_2$.
\end{proof}

We are now ready to prove the Theorem.

Let
\[
  E_k = \{\, x : d(T^k (x), x) < r_k (x) \,\} = \{\, x : T^k (x)
  \in B_k (x) \,\}
\]
with $r_k (x)$ defined by the relation
\[
  \mu (B (x, r_k(x))) = m_k.
\]
Then certain estimates hold. More precisely, under the
assumptions of the Theorem, we have that
\begin{align*}
  | \mu (E_k) - m_k | & \leq c e^{-\eta k}, \\
  \mu (E_k \cap E_{k+j}) & \leq \mu (E_k) \mu (E_{k+j}) + c e^{-\eta
                           k} + c e^{-\eta j}
\end{align*}
for some constants $c, \eta > 0$ \cite[Lemma~4.1 and
4.2]{KirsebomKundePersson}.

We put $f_k = \mu (E_k)^{-1} \mathbf{1}_{E_k}$. By
Proposition~\ref{prop:estimates}, we have
\[
  \int \biggl( \sum_{m < k \leq n} ( \mu(E_k)^{-1}
  \mathbf{1}_{E_k} - 1) \biggr)^2 \, \mathrm{d} \mu \leq c_2
  \sum_{m < k \leq n} \frac{1}{k^3 \mu (E_k)} \sum_{m < k \leq n}
  \frac{\log k}{\mu (E_k)}.
\]
Since $\mu(E_k)^{-1} \leq k (\log k)^{- 4 - \varepsilon}$, we have
\begin{align*}
  \int \biggl( \sum_{m < k \leq n} ( \mu(E_k)^{-1}
  \mathbf{1}_{E_k} - 1) \biggr)^2 \, \mathrm{d} \mu 
  & \leq c_4 \sum_{m < k \leq n} k (\log k)^{-3 - \varepsilon}.
\end{align*}

From Lemma~\ref{lem:Sprindzuk} follows that for almost all $x$
\begin{align*}
  \sum_{k = 1}^n \frac{1}{\mu (E_k)} \mathbf{1}_{B_k (x)}
  (T^k (x))
  &= \sum_{k = 1}^n \frac{1}{\mu(E_k)} \mathbf{1}_{E_k}
    (x) \\
  & = n + O \Bigl( \Phi (n + 1)^\frac{1}{2} (\log \Phi
    (n + 1))^{\frac{3}{2} + \frac{\varepsilon}{5}} \Bigr),
\end{align*}
where
\[
  \Phi (n) = \sum_{k = 1}^n c_4 k (\log k)^{-3 - \varepsilon}
  \leq c_5 n^2 (\log n)^{-3 - \varepsilon}.
\]
Hence
\[
  \sum_{k = 1}^n \frac{1}{\mu (E_k)} \mathbf{1}_{B_k (x)} (T^k
  (x)) = n + O \Bigl( n (\log
  n)^{- \frac{\varepsilon}{4}} \Bigr).
\]
In particular, since $\mu (E_k) / \mu (B_k (x)) \to 1$, we have
\[
  \lim_{n \to \infty} \frac{1}{n} \sum_{k = 1}^n \frac{1}{\mu
    (B_k (x))} \mathbf{1}_{B_k (x)} (T^k (x)) = 1
\]
for almost all $x$.  Finally, Lemma~\ref{lem:abel} implies that
\[
  \lim_{n \to \infty} \frac{\sum_{k = 1}^n \mathbf{1}_{B_k (x)}
    (T^k (x))}{\sum_{k = 1}^n \mu (B_k (x))} = 1,
\]
for almost every $x$.

\section{Proof of the Corollary} \label{sec:corollary}

In this section, we assume that $\mu$ is a non-atomic probability
measure, not necessarily invariant under $T$. Under this
assumption we prove two propositions, from which the Corollary
of this paper immediately follows.

\begin{proposition} \label{pro:liminf}
  Suppose that correlations are summable for $L^1$ against $BV$.
  For almost every $x$ we have that
  \[
    \liminf_{r \to 0} \frac{\log \tau_{B(x, r)} (x)}{- \log \mu
      (B(x, r))} \geq 1.
  \]
\end{proposition}

\begin{proof}
  Suppose that $x$ is such that
  \[
    \liminf_{r \to 0} \frac{\log \tau_{B(x, r)} (x)}{- \log \mu
      (B(x, r))} < \theta < 1.
  \]
  We may then choose sequences $r_n \to 0$ and $M_n \to \infty$
  such that
  \[
    \frac{\log \tau_{B(x, r_n)} (x)}{- \log \mu (B(x, r_n))} <
    \theta \qquad \text{and} \qquad 2^{- M_n - 1} \leq \mu (B(x,
    r_n)) \leq 2^{- M_n},
  \]
  for all $n$.

  Define the function $\rho_k$ such that
  \[
    \mu (B (y, \rho_k (y))) = 2 k^{-\frac{1}{\theta}}
  \]
  for all $y$. Put $k_n = 2^{(M_n + 1) \theta}$. Then, for the
  point $x$, we have
  \[
    B (x, r_n) \subset B (x, \rho_{k_n})
  \]
  since
  $\mu (B (x, \rho_{k_n})) = 2^{- M_n} \geq \mu (B (x,
  r_n))$. We then have
  \[
    \tau_{B(x, \rho_{k_n} (x))} \leq \tau_{B(x, r_n)} (x) \leq
    \mu (B (x, r_n))^{- \theta} \leq 2^{(M_n + 1)\theta} = k_n.
  \]
  It follows that there are infinitely many $k$ such that
  $T^k (x) \in B (x, \rho_k (x))$. But the set of $x$ with this
  property is a set of zero measure since $k^{-\frac{1}{\theta}}$
  is a summable sequence \cite[Theorem~C]{KirsebomKundePersson}.
\end{proof}

It now remains to prove the following proposition, for which no
mixing assumption is needed.

\begin{proposition}
  Suppose that $x$ is a point such that
  \[
    \lim_{n \to \infty} \frac{\sum_{k=1}^n \mathbf{1}_{B (x,
        r_k)} (T^k x)}{\sum_{k = 1}^n \mu (B (x,r_k))} = 1,
  \]
  where $r_k$ is such that
  $\mu (B(x, r_k)) = \frac{(\log k)^5}{k}$. Then
  \[
    \limsup_{r \to 0} \frac{\log \tau_{B(x, r)} (x)}{- \log \mu
      (B(x, r))} \leq 1.
  \]
\end{proposition}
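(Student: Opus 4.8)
The plan is to adapt the argument of Galatolo and Kim: given a small radius $r$, I will exhibit a close return to a ball $B(x,r_k)$ that is contained in $B(x,r)$ and that occurs at a time $k$ only slightly larger than the first index for which such containment is guaranteed, and then compare $\log k$ with $-\log\mu(B(x,r))$. Write $m_k = (\log k)^5/k$, $M_n = \sum_{k=1}^n m_k$ and $S_n = \sum_{k=1}^n \mathbf{1}_{B(x,r_k)}(T^k x)$, so the hypothesis reads $S_n/M_n \to 1$; recall also the elementary estimate $M_n \sim \tfrac{1}{6}(\log n)^6$. Fix a small $r>0$ and set $\beta = \mu(B(x,r))$. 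Since $m_k \to 0$ and $m_k$ is eventually decreasing, for small $\beta$ there is a well-defined smallest index $k_0 = k_0(r)$ with $m_k \le \beta$ for all $k \ge k_0$, and then $m_{k_0} \le \beta < m_{k_0-1}$. Because $\mu$ is non-atomic, $\rho \mapsto \mu(B(x,\rho))$ is continuous and non-decreasing, so $m_k \le \beta$ forces $r_k \le r$; hence $B(x,r_k) \subseteq B(x,r)$ for every $k \ge k_0$.

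Next I fix $\delta>0$ and put $n = [k_0^{1+\delta}]$. The key point is that the window $(k_0,n]$ must contain a close return once $r$ is small enough. Indeed,
\[
  M_n - M_{k_0} \sim \tfrac{1}{6}\bigl((1+\delta)^6 - 1\bigr)(\log k_0)^6,
\]
which is a fixed positive proportion of $M_n$ and tends to infinity as $k_0 \to \infty$. Since both $n$ and $k_0$ tend to infinity as $r \to 0$, the hypothesis gives $S_n \sim M_n$ and $S_{k_0} \sim M_{k_0}$, whence
\[
  S_n - S_{k_0} = (M_n - M_{k_0}) + o(M_n) > 0
\]
for all sufficiently small $r$. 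Thus there is at least one $k \in (k_0,n]$ with $T^k(x) \in B(x,r_k) \subseteq B(x,r)$, and therefore $\tau_{B(x,r)}(x) \le n \le k_0^{1+\delta}$.

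It remains to convert this into the asserted bound. From $m_{k_0} \le \beta < m_{k_0-1}$ and the fact that $m_{k_0}$ and $m_{k_0-1}$ are both comparable to $(\log k_0)^5/k_0$, taking logarithms yields
\[
  -\log\beta = \log k_0 - 5\log\log k_0 + O(1) = (1+o(1))\log k_0
\]
as $k_0 \to \infty$. Consequently
\[
  \frac{\log \tau_{B(x,r)}(x)}{-\log\mu(B(x,r))}
  \le \frac{\log n}{-\log\beta}
  \le \frac{(1+\delta)\log k_0}{(1+o(1))\log k_0} \longrightarrow 1+\delta
\]
as $r \to 0$, so the $\limsup_{r\to 0}$ of the left-hand side is at most $1+\delta$. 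Letting $\delta \to 0$ proves the proposition.

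The main obstacle is the second step: upgrading the asymptotic $S_n \sim M_n$ to the guarantee that the comparatively short window $(k_0,n]$ actually contains a return. This succeeds precisely because the choice $n = [k_0^{1+\delta}]$ makes the gain $M_n - M_{k_0}$ a fixed fraction of $M_n$, so that it dominates the $o(M_n)$ errors inherent in $S_n \sim M_n$ and $S_{k_0} \sim M_{k_0}$; a window growing too slowly would not accumulate enough expected returns to overcome these error terms, and the bound $\tau_{B(x,r)}(x) \le n$ would fail.
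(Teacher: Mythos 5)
Your proof is correct and is essentially the paper's argument run in the contrapositive direction: where the paper assumes a late return at scale $\rho_{n_j}$, deduces that $S$ stalls on the window $(n_j,\,n_j^\theta/(\log n_j)^5]$, and contradicts $S_n\sim M_n$ via $M_n\sim\tfrac16(\log n)^6$, you use the same growth of $M_n$ to show directly that a return must occur in $(k_0,k_0^{1+\delta}]$, whence $\tau_{B(x,r)}(x)\le k_0^{1+\delta}$. The one point to tidy is the claim that $m_k\le\mu(B(x,r))$ forces $r_k\le r$: since $\rho\mapsto\mu(B(x,\rho))$ is only non-decreasing, not necessarily injective, you should take $r_k$ minimal with $\mu(B(x,r_k))=m_k$ (the paper is equally informal on this).
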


\begin{proof}
  To get a contradiction, suppose that $x$ is such that
  \[
    \limsup_{r \to 0} \frac{\log \tau_{B(x, r)} (x)}{- \log \mu
      (B(x, r))} > \theta > 1.    
  \]
  Put $m_n = \frac{(\log n)^5}{n}$. There is a sequence $\rho_n$
  such that
  \[
    m_{n+1} \leq \mu (B (x, \rho_n)) \leq m_n
  \]
  and a sequence $n_j$ such that
  \[
    \tau_{B(x, \rho_{n_j})} (x) \geq \mu (B (x,
    \rho_{n_j}))^{-\theta}.
  \]
  It then follows that
  \[
    \tau_{B(x, \rho_{n_j})} (x) \geq \frac{n_j^\theta}{(\log
      n_j)^{5\theta}}.
  \]
  This means that if we put
  \[
    S_n (x) = \sum_{k=1}^n \mathbf{1}_{B (x, \rho_k)} (T^k x),
  \]
  then $S_{n_j} (x) = S_{N_j} (x)$, where
  $N_j = \frac{n_j^\theta}{(\log n_j)^{5\theta}}$.

  However, we have
  \begin{equation}
    \label{eq:nj-Nj}
    \frac{\sum_{k = 1}^{n_j} \mu (B (x, \rho_k))}{\sum_{k =
        1}^{N_j} \mu (B (x, \rho_k))} \leq \frac{ \sum_{k =
        1}^{n_j} m_k}{ \sum_{k = 1}^{n_j} m_k + \sum_{k = n_j +
        1}^{N_j} m_{k+1}} \leq \frac{1}{1 + K}
  \end{equation}
  for some constant $K > 0$, and
  \begin{equation}
    \label{eq:r-rho}
    \lim_{n \to \infty} \frac{\sum_{k = 1}^{n} \mu (B(x,
      r_k))}{\sum_{k = 1}^{n} \mu (B(x, \rho_k))} = 1.
  \end{equation}

  Now, because $S_{n_j} (x) = S_{N_j} (x)$ we have
  \begin{multline*}
    \frac{S_{n_j} (x)}{\sum_{k = 1}^{n_j} \mu(B(x,r_k))} \\ =
    \frac{S_{N_j} (x)}{\sum_{k = 1}^{N_j} \mu(B(x,\rho_k))}
    \frac{\sum_{k = 1}^{N_j} \mu(B(x,\rho_k))}{\sum_{k = 1}^{n_j}
      \mu(B(x,\rho_k))} \frac{\sum_{k = 1}^{n_j}
      \mu(B(x,\rho_k))} {\sum_{k = 1}^{n_j} \mu(B(x,r_k))},
  \end{multline*}
  and it follows from \eqref{eq:nj-Nj} and \eqref{eq:r-rho} that
  we cannot have
  \[
    \lim_{n \to \infty} \frac{S_n (x)}{\sum_{k = 1}^n \mu (B
      (x,r_k))} = 1,
  \]
  which is a contradiction.
\end{proof}

\section{Slowly mixing systems} \label{sec:slowmixing}

For the types of systems considered in the Theorem, it is known
\cite[Theorem~C]{KirsebomKundePersson} that if
$\sum m_k < \infty$, then for almost every $x$ we have that
$T^k (x) \in B(x, r_k (x))$ for at most finitely many $k$. For
some systems, it is furthermore known that if
$\sum m_k = \infty$, then for almost all $x$ we have that
$T^k (x) \in B(x, r_k (x))$ for infinitely many $k$
\cite{Husseinetal}, \cite[Theorem~D]{KirsebomKundePersson}.

It would not be unreasonable to suspect that for some systems we
have the following stronger dichotomy: The conclusion of the
Theorem holds whenever $(m_k)$ is a sequence of non-negative
numbers that satisfies $\sum m_k = \infty$, and if
$\sum m_k < \infty$, then for almost every $x$ we have that
$T^k (x) \in B(x, r_k (x))$ for at most finitely many $k$.

Because of technical reasons in the proof, we have not been able
to prove this dichotomy. However, it is possible to provide
examples of slowly mixing systems for which this dichotomy does
not hold. The purpose of this section is to give two such
examples.

\begin{example}[Rotations]
  Rotations are not mixing and one may therefore regard them as
  particularly slowly mixing. Let $\alpha$ be a real number and
  let $T_\alpha \colon \mathbf{S}^1 \to \mathbf{S}^1$ be the
  rotation by the angle $\alpha$ on the one dimensional circle
  $\mathbf{S}^1$. The Lebesgue measure is invariant, but not
  mixing.

  It is clear that for any $x \in \mathbf{S}^1$, we have
  $T_\alpha^k (x) \in B(x,r_k)$ if and only if
  $T_\alpha^k (0) \in B(0,r_k)$.

  If $\alpha$ is a badly approximable number, then there is a
  $c > 0$ such that
  \[
    d(x, T_\alpha^k (x)) = d(0, T_\alpha^k (0)) > m_k :=
    \frac{c}{k}
  \]
  for all $k > 0$. In this case we have $\sum m_k =
  \infty$, but for every $x$, there is no $k >
  0$ such that $T_\alpha^k (x) \in B(x, r_k)$.
  
  If $\alpha$ is of a higher Diophantine class, say for instance
  such that $d(0, T_\alpha^k (0)) < \frac{1}{k^2}$ for infinitely
  many $k$ (as is also the case if $\alpha$ is rational), then
  for these $k$ we have
  \[
    d(x, T_\alpha^k (x)) = d(0, T_\alpha^k (0)) < m_k :=
    \frac{1}{k^2},
  \]
  so that $\sum m_k < \infty$ but for every $x$ we have
  $T_\alpha^k (x) \in B(x, r_k)$ for infinitely many $k$.

  In fact \cite{KoukoulopoulosMaynard}, for almost all $\alpha$,
  we have for all $x$ that $T_\alpha^k(x) \in B(x,r_k)$ for
  infinitely many $k$, if and only if
  $\sum \phi (k) r_k = \infty$, where $\phi$ is Euler's totient
  function.
\end{example}

\begin{example}[Certain skew products]
  Here we consider a simple special case of the type of systems
  considered by Galatolo, Rousseau and Saussol
  \cite{GalatoloRousseauSaussol}. As in the previous example,
  $T_\alpha$ is the rotation by the angle $\alpha$. We consider
  the skew product transformation
  $T \colon [0,1) \times \mathbf{S}^1 \to [0,1) \times
  \mathbf{S}^1$ defined by
  \[
    T (x,y) = (2 x \bmod 1, y + \alpha \mathbf{1}_{[0,
      \frac{1}{2})} (x)).
  \]
  The two dimensional Lebesgue measure, which we denote by $\mu$,
  is invariant under $T$, and $\mu$ is mixing, with a rate which
  depends on $\alpha$.
  
  Let $\gamma (\alpha)$ be the infimum of those $\beta$ such that
  there is a $c > 0$ with
  $d(0,T_\alpha^k (0)) > \frac{c}{k^\beta}$ for all $k >
  0$. Then, as a special case of a theorem by Galatolo, Rousseau
  and Saussol \cite[Theorem~23]{GalatoloRousseauSaussol}, we have
  for almost all $p = (x,y)$ that the number
  \[
    \underline{R} (p) = \liminf_{r \to 0} \frac{\log
      \tau_{B(p,r)} (p)}{- \log r}
  \]
  satisfies
  $\underline{R} (p) \leq 1 + \frac{2}{\gamma
    (\alpha)}$. Moreover, $T$ is mixing for Lipschitz continuous
  functions with a rate $O(n^{-\frac{1}{2 \gamma (\alpha)}})$
  \cite[Proposition~9]{GalatoloRousseauSaussol}.

  We may choose $\alpha$ such that $\gamma (\alpha) > 2$ and
  then $\underline{R} (p) \leq 1 + \frac{2}{\gamma(\alpha)} < 2$
  holds for almost all $p$. Since $\mu$ is the two dimensional
  Lebesgue measure, we have
  \[
    \liminf_{r \to 0} \frac{\log \tau_{B(p, r)} (p)}{- \log \mu
      (B(p, r))} = \frac{1}{2} \underline{R} (p) \leq \frac{1}{2}
    + \frac{1}{\gamma} < 1
  \]
  for almost every $p$. Take
  \[
    \theta \in \biggl(\frac{1}{2} + \frac{1}{\gamma}, 1 \biggr).
  \]
  Following the beginning of the proof of
  Proposition~\ref{pro:liminf}, we obtain that for almost all $p$
  we have $T^k (p) \in B(p, \rho_k)$ for infinitely many $k$, and
  that $m_k := \mu (B(p, \rho_k)) = 2 k^{-\frac{1}{\theta}}$.

  Hence, $\sum m_k < \infty$ but nevertheless we have for almost
  all $p$ that $T^k (p) \in B(p, \rho_k)$ for infinitely many
  $k$.
\end{example}

\end{document}